\newcommand\cF{{\mathcal F}}
\newcommand{\abs}[1]{\left\lvert{#1}\right\rvert}
\newcommand{\floor}[1]{\left\lfloor{#1}\right\rfloor}
\theoremstyle{plain}
\newtheorem{theorem}{Theorem}[section]
\newtheorem{lemma}[theorem]{Lemma}
\newtheorem{conjecture}[theorem]{Conjecture}
\newtheorem{proposition}[theorem]{Proposition}
\theoremstyle{definition}
\newtheorem{defn}[theorem]{Definition}
\newcommand\cref[1]{Corollary~\ref{cor:#1}}
\title{Set systems related to a house allocation problem}
\date{}
\begin{document}
% uncomment below for authblk version
\title{Set systems related to a house allocation problem}

\author[1]{D\'aniel Gerbner} 
\author[1,2]{Bal\'azs Keszegh}
\author[4]{Abhishek Methuku}
\author[1]{D\'aniel T. Nagy}
\author[1]{Bal\'azs Patk\'os}
\author[3,4]{Casey Tompkins} 
\author[5]{Chuanqi Xiao} 

\affil[1]{\small Alfr\'ed R\'enyi Institute of Mathematics, Hungary}
\affil[2]{\small MTA-ELTE Lend\"ulet Combinatorial Geometry Research Group, Hungary}
\affil[3]{\small Karlsruhe Institute of Technology, Germany}
\affil[4]{\small Discrete Mathematics Group, Institute for Basic Science (IBS), Daejeon, Republic of Korea}
\affil[5]{\small Central European University, Hungary}

\maketitle

\begin{abstract}
%change "is strictly better than" to dominates? -ct
We are given a set $A$ of buyers, a set $B$ of houses, and for each buyer a preference list, i.e., an ordering of the houses. A house allocation is an injective mapping $\tau$ from $A$ to $B$, and $\tau$ is strictly better than another house allocation $\tau'\neq \tau$ if for every buyer $i$, $\tau'(i)$ does not come before $\tau(i)$ in the preference list of $i$. A house allocation is Pareto optimal if there is no strictly better house allocation.

Let $s(\tau)$ be the image of $\tau$ (i.e., the set of houses sold in the house allocation $\tau$). We are interested in the largest possible cardinality $f(m)$ of the family of sets $s(\tau)$ for Pareto optimal mappings $\tau$ taken over all sets of preference lists of $m$ buyers. We improve the earlier upper bound on $f(m)$ given by Asinowski, Keszegh and Miltzow~\cite{AKM}, by making a connection between this problem and some problems in extremal set theory.
    
\end{abstract}
\section{Introduction}

%\begin{theorem}The number of POMs is at most ...\end{theorem}

In this short note we consider set system properties related to the following house allocation problem: Suppose we are given an $m$-element set $A$ of buyers, an infinite set $B$ of houses and for each buyer $a \in A$ we are given a preference list of $B$. Then we say that a matching $\tau$ from $A$ to $B$ is \emph{Pareto optimal} (a POM) if there does not exist a nonempty subset $A'$ of $A$ (a \emph  {blocking coalition}) and another matching $\tau'$ from $A$ to $B$ that differs from $\tau$ only on $A'$ such that for every $a\in A'$, $\tau'(a)$ is placed higher in the preference list of $a$ than $\tau(a)$. If for some matching $\tau$, there does not exist a blocking coalition of size at most $i$, we say that $\tau$ is an $i$-POM.

%Changed all the POM's to POMs because plural of POM is POMs. -Abhi
There is a very natural way to generate POMs: Consider a permutation $\pi$ of $A$ and let the buyers select their favorite house from those that are not already taken, in order $\pi(1), \pi(2),\dots,\pi(m)$. The corresponding matching is denoted by $\tau_\pi$. For any matching $\tau$ the set $\{\tau(a):a\in A\}$ of houses which are taken is denoted by $s(\tau)$. The following lemma from~\cite{AKM} summarizes all that we will use in this paper about POMs. We say that a subset $E$ of $B$ is reachable if there exists a POM $\tau$ with $s(\tau)=E$.

\begin{lemma}[Lemma 7 in~\cite{AKM}]\label{equiv}
Let $E\subseteq \{ 1,...,n\}$ with $|E|=m$. The following statements are equivalent.
\begin{enumerate}
    \item 
    $E$ is reachable, i.e., there exists a POM $\tau$ with $s(\tau) =E$.
    \item
    There exists a permutation $\pi$ such that for the greedy matching $\tau_\pi$ we have $s(\tau_\pi) =E$.
    \item
    There exists a 1-Pareto optimal matching (1-POM) $\tau$ with $s(\tau) =E$.

\end{enumerate}
\end{lemma}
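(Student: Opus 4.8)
The plan is to prove the three equivalences through the cycle of implications $(1)\Rightarrow(3)\Rightarrow(2)\Rightarrow(1)$, with essentially all of the work concentrated in $(3)\Rightarrow(2)$.

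Two of these are short. For $(1)\Rightarrow(3)$, I observe that a POM by definition admits no blocking coalition of any size, and in particular none of size at most $1$; hence the very same matching $\tau$ is already a 1-POM, and its image $E$ is unchanged. For $(2)\Rightarrow(1)$, I would show directly that every greedy matching $\tau_\pi$ is a POM. Suppose not, and let $A'$ be a blocking coalition with alternative matching $\tau'$ agreeing with $\tau_\pi$ off $A'$. Let $a^\ast$ be the member of $A'$ appearing earliest in the order $\pi$. Every buyer preceding $a^\ast$ lies outside $A'$, so it receives the same house under $\tau_\pi$ and $\tau'$; thus the houses occupied just before $a^\ast$ makes its greedy choice are exactly $\{\tau'(b): b \text{ precedes } a^\ast\}$. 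Since $\tau'$ is injective, $\tau'(a^\ast)$ avoids all of these and was therefore available to $a^\ast$; but $a^\ast$ greedily took its favourite available house $\tau_\pi(a^\ast)$, so $\tau_\pi(a^\ast)$ is weakly preferred to $\tau'(a^\ast)$, contradicting that $a^\ast$ strictly improves.

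The heart of the argument is $(3)\Rightarrow(2)$: from a 1-POM $\tau$ with $s(\tau)=E$ I must build a permutation whose greedy matching has the same image (not necessarily the same matching). I would maintain a set $S$ of remaining buyers together with the set $R=\{\tau(b):b\in S\}\subseteq E$ of remaining houses, initialized at $S=A$ and $R=E$. The key observation, coming from the 1-POM property, is that for each $a\in S$ the favourite house of $a$ among all currently unoccupied houses lies in $R$: indeed $\tau(a)\in R$ is available, and any house $a$ prefers to $\tau(a)$ is forced into $E$ by 1-optimality, so the best available option is either $\tau(a)$ itself or a preferred house of $E$ still lying in $R$. Pointing each $a\in S$ to the holder $\tau^{-1}(\cdot)$ of this favourite house defines a function $S\to S$, which must contain a cycle $a_1\to a_2\to\cdots\to a_k\to a_1$. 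Placing $a_1,\dots,a_k$ next in $\pi$ in this order, each $a_i$ finds its favourite house $\tau(a_{i+1})$ still free and takes it, so collectively they occupy exactly $\{\tau(a_1),\dots,\tau(a_k)\}\subseteq E$; I then delete these buyers and houses and repeat. The invariant $R=\{\tau(b):b\in S\}$ is preserved, and since each buyer takes a distinct house of $E$ and there are $|E|=m$ of them, the final greedy image is all of $E$.

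The step I expect to be the main obstacle is reconciling this cycle-processing description with the actual greedy rule: I must verify that at the moment $a_i$ is processed, its favourite among \emph{all} available houses (not merely among $R$) equals $\tau(a_{i+1})$, and that this house is still free despite the earlier choices of $a_1,\dots,a_{i-1}$. Both follow because deleting houses other than a buyer's current favourite cannot change that favourite, and because $\tau(a_{i+1})$ is the image of a not-yet-processed buyer and hence untouched; but stating the invariant precisely and confirming that it transfers intact to the reduced instance is where the care is needed.
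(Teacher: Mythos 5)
Your proof is correct. Note that the paper itself gives no proof of this statement --- it is quoted as Lemma~7 of~\cite{AKM} --- so the comparison can only be with that reference: your argument (the trivial implication $(1)\Rightarrow(3)$; the ``earliest member of the blocking coalition'' argument for $(2)\Rightarrow(1)$; and, for $(3)\Rightarrow(2)$, pointing each remaining buyer to the holder of its favourite still-available house and greedily processing a cycle of this map, which stays inside $E$ by the 1-POM property) is the standard serial-dictatorship/top-trading-cycles argument and is essentially the proof given in~\cite{AKM}, with all invariants (occupied houses $=E\setminus R$, and shrinking the available set cannot change a favourite that survives) correctly maintained.
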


 The number of reachable sets depends on the preference lists. Let $f(m)$ denote the maximum number of sets of reachable houses taken over all $m$-tuples of preference lists. We will elaborate more on the motivation and background of the problem of determining $f(m)$ in the next subsection.

%geometric motivation \cite{BHJKRST}.

Let us begin by introducing the set system properties that we will be interested in.

\begin{defn}
We say that a family $\cF$ of $m$-element sets has property $P$ if for any integer $k$ and any $F_1,\dots, F_k\in\cF,$ there exists $j$, $1 \le j \le k,$ such that $\abs{F_j \setminus \cup_{i \neq j} F_i} \le \floor{m/k}$. Let $g(m)$ denote the largest cardinality of a family with property $P$.
\end{defn}

\begin{defn}
We say that a family $\cF$ of $m$-element sets has property $Q$ if for any integer $k$ and any $F_1,\dots, F_k\in\cF$, we have $|\bigcup_{i=1}^k F_i|\le \sum_{i=1}^k \floor{m/i}$. Let $h(m)$ denote the largest cardinality of a family with property $Q$.
\end{defn}

Obviously property $P$ implies property $Q$. Therefore, $g(m)\le h(m)$.
The following lemma is hidden in the proof of the upper bound on the number of reachable houses from~\cite{AKM}.
\begin{lemma}[\cite{AKM}]
\label{reach} The family of the sets of reachable houses has property $P$ and thus $f(m)\le g(m)$.
%Let $F_1,F_2,\dots,F_k$ be reachable sets of houses.  Then there exists $j$, $1 \le j \le k$ such that $\abs{F_j \setminus \cup_{i \neq j} F_i} \le \floor{m/k}$.
\end{lemma}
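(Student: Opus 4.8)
The plan is to realize each reachable set by a greedy matching and then track, for each buyer, the quality of the house it receives across the different matchings. Fix an integer $k$ and reachable sets $F_1,\dots,F_k\in\cF$. By Lemma~\ref{equiv} (condition (2)), for each $j$ we may choose a permutation $\pi_j$ of $A$ so that the greedy matching $\tau_{\pi_j}$ satisfies $s(\tau_{\pi_j})=F_j$. For a buyer $a$ and an index $j$, write $r_j(a)$ for the position of the house $\tau_{\pi_j}(a)$ in the (fixed) preference list of $a$, so that a smaller $r_j(a)$ means a more preferred house. Set $U_j:=F_j\setminus\bigcup_{i\ne j}F_i$ for the houses sold only in the $j$-th matching, and let $A_j:=\tau_{\pi_j}^{-1}(U_j)$ be the set of buyers who purchase such a house in process $j$, so that $\abs{A_j}=\abs{U_j}$.

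The key step is the following observation about a buyer $a\in A_j$. Let $h=\tau_{\pi_j}(a)\in U_j$ be the unique house it buys in process $j$. For any other index $i\ne j$ we have $h\notin F_i$, so $h$ is never bought during the greedy process governed by $\pi_i$; in particular $h$ is still available at the moment $a$ is processed in that run. Since greedy buyers always take their favourite available house and $a$ does not end up with $h$ (as $h\notin F_i$), the house $\tau_{\pi_i}(a)$ must be strictly preferred to $h$ by $a$. Thus $r_i(a)<r_j(a)$ for every $i\ne j$, i.e.\ $r_j(a)$ is the \emph{strict} maximum of $r_1(a),\dots,r_k(a)$.

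I would then use this to conclude that the sets $A_1,\dots,A_k$ are pairwise disjoint: a buyer can attain a strict maximum of its rank sequence for at most one index, so it lies in at most one $A_j$. Consequently
\[
\sum_{j=1}^k \abs{U_j}=\sum_{j=1}^k\abs{A_j}\le \abs{A}=m,
\]
and an averaging argument (with the floor following from $k\floor{m/k}+k>m$) yields some $j$ with $\abs{U_j}=\abs{F_j\setminus\bigcup_{i\ne j}F_i}\le\floor{m/k}$. This is exactly property $P$, and since $\tau$ is injective every reachable set has size $m$, so the family of reachable sets is a family of $m$-element sets with property $P$; the bound $f(m)\le g(m)$ then follows from the definition of $g(m)$.

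I expect the main obstacle to be the availability argument in the second paragraph: one must be sure that a house unique to the $j$-th matching genuinely remains unsold throughout every other greedy run, and that ``not buying $h$ while $h$ is available'' forces a strictly better outcome rather than merely a weakly better one. Once this strict inequality is secured, the disjointness of the $A_j$ and the pigeonhole conclusion are routine.
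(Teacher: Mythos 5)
Your proof is correct, and it fills a gap rather than duplicates anything: the paper does not prove this lemma at all, but quotes it from \cite{AKM}, where it is ``hidden in the proof of the upper bound,'' so your self-contained argument (realize each $F_j$ by a greedy run via Lemma~\ref{equiv}(2), show that a buyer receiving a private house in run $j$ does strictly better in every other run, conclude the $A_j$ are disjoint, and average) is essentially the argument implicit in that reference. The availability step you flagged is sound: the houses sold in run $i$ are exactly $F_i=s(\tau_{\pi_i})$, so $h\notin F_i$ is never taken at any point of that run, and since preference lists are strict total orders, picking the favourite available house while $h$ is available but ending up with something other than $h$ forces a strictly preferred house. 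A slightly shorter route to the same strict inequality uses Lemma~\ref{equiv}(3) instead of (2): if $\tau_i$ is a 1-POM and $h\notin s(\tau_i)$, then buyer $a$ cannot prefer $h$ to $\tau_i(a)$, since otherwise $\{a\}$ would be a blocking coalition; this avoids reasoning about the moment $a$ is processed in the greedy run.
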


The upper bound in~\cite{AKM} on the number of houses uses only property $Q$, showing that if $\cF$ has property $Q$, then $|\bigcup\cF|\le m(\ln m+1)$. This obviously implies $|\cF|\le \binom{m(\ln m+1)}{m}$ and thus  $f(m)\le g(m)\le h(m)\le |\cF|\le \binom{m(\ln m+1)}{m}$. We improve this bound on $|\cF|$ as follows.

\begin{theorem}\label{main} 
%The number of reachable sets of houses is at most $\binom{2m}{m}$. 
(i) Let $\ell=\ell(m)$ be the largest integer with $\ell^{\ell}\le m$. Then $f(m)\le \binom{2m-\ell+4}{m}$ holds if $m$ is sufficiently large. In other words, the family of the sets of reachable houses taken over all $m$-tuples of preference lists contains at most $\binom{2m-\ell+4}{m}$ sets if $m$ is sufficiently large.

(ii) We have $g(m)\le \binom{2m-1}{m}$. In other words, if a family $\cF$ of $m$-element sets has property~$P$, then $|\cF|\le \binom{2m-1}{m}$.
\end{theorem}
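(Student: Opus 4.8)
My plan is to deduce part (ii) from the skew version of the Bollobás set‑pair inequality: if $(A_i,B_i)_{i=1}^{N}$ are pairs of finite sets with $A_i\cap B_i=\emptyset$ for every $i$ and $A_i\cap B_j\ne\emptyset$ whenever $i<j$, then $\sum_{i=1}^{N}\binom{\abs{A_i}+\abs{B_i}}{\abs{A_i}}^{-1}\le 1$. I would apply this with the $A_i$ ranging over the members of $\cF$ (each of size $m$) and with each $B_i$ a set of size at most $m-1$ disjoint from $A_i$; then every binomial coefficient is at most $\binom{2m-1}{m}$, so the inequality gives $\abs{\cF}\le\binom{2m-1}{m}$. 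Thus the whole problem reduces to constructing suitable dual sets $B_i$.

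To build them I would first use property $P$ to linearly order $\cF$. Repeatedly apply property $P$ to the current subfamily, locating a member whose private part is smallest, and peel it off; reading the removal order backwards gives an ordering $F_1,\dots,F_N$ such that, with $U_t=F_1\cup\dots\cup F_t$, each new set satisfies $\abs{F_t\setminus U_{t-1}}\le\floor{m/t}$. For each $i$ I let $B_i$ be an inclusion‑minimal transversal of $\{F_1,\dots,F_{i-1}\}$ chosen inside $U\setminus F_i$ (this exists because each earlier set, being distinct from $F_i$, sticks out of $F_i$); then $B_i\cap F_i=\emptyset$ and $B_i\cap F_j\ne\emptyset$ for all $j<i$, exactly the set‑pair hypotheses. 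If $\abs{U}<2m-1$ the bound is trivial, so everything hinges on the size estimate $\abs{B_i}\le m-1$. I would prove first that $\abs{B_i}\le m$: a minimal transversal supplies witness sets $W_1,\dots,W_r$ ($r=\abs{B_i}$), where $W_p$ is an earlier set met by $B_i$ only in its private element $x_p\in U\setminus F_i$; these witnesses form an \emph{irredundant} family (each $x_p$ lies in no other $W_q$), and property $P$ with $k=r$ forces $\floor{m/r}\ge 1$, i.e.\ $r\le m$. To save the final unit I adjoin $F_i$ to the witnesses: the $x_p$ remain private since they avoid $F_i$, so provided $F_i$ contributes some private element the enlarged family is irredundant of size $r+1$, whence $r\le m-1$.

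The main obstacle is precisely the extremal configuration $F_i\subseteq W_1\cup\dots\cup W_r$ with $r=m$, in which $F_i$ has no private element relative to the witnesses and property $P$ alone yields only $r\le m$. I expect to rule this out by insisting on the \emph{minimum‑private} peeling order rather than an arbitrary valid one. The point is that the most redundant sets are stripped off earliest, so a set covered by sets preceding it is removed early and never plays the role of the ``$F_i$'' whose preceding transversal is large; in the examples I have checked this collapses the apparent size‑$m$ transversals all the way down to size one. Showing robustly that, under the minimum‑private order, the size‑$m$ case cannot occur for any $i$ is the crux of the argument for (ii).

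For part (i) I would run the identical set‑pair scheme on the family of reachable sets, which has property $P$ by Lemma~\ref{reach}. The improvement of the exponent from $-1$ to $-\ell+4$ would come from sharpening the size bound on $B_i$: in an irredundant subfamily each member must contribute a \emph{fresh} private element as it is added in the peeling order, while the graded profile $\abs{F_t\setminus U_{t-1}}\le\floor{m/t}$ limits how many fresh elements the first few sets can introduce. Balancing these harmonic contributions $\floor{m/t}$ for the initial indices against the common size $m$ should cap the irredundant size near $m-\ell$, with the threshold $\ell^{\ell}\le m$ being exactly what makes the first $\ell$ blocks responsible for the savings; converting this heuristic into the stated additive constant $+4$ is the delicate, computation‑heavy part of (i).
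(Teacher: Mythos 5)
Your framework as it stands proves only the easy bound $\binom{2m}{m}$ (which the paper notes is immediate from Theorem~\ref{Frankl}); the step that would deliver $\binom{2m-1}{m}$ is exactly the one you leave open, and it cannot be closed the way you hope. The minimal-transversal argument giving $|B_i|\le m$ is correct (it is the Frankl--Pach observation \cite{FP84}, also used in the paper), but the improvement to $|B_i|\le m-1$ breaks precisely in the configuration $F_i\subseteq W_1\cup\cdots\cup W_r$, and property $P$ is powerless against it: applied to $\{F_i,W_1,\dots,W_r\}$ with $k=r+1=m+1$ it only demands that \emph{some} member have no private element, and the covered set $F_i$ itself fulfills that demand. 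Your proposed rescue via the ``minimum-private peeling order'' comes with no argument, so (ii) is unproven. (Separately, the summation form of the skew Bollob\'as inequality you quote is false: the pairs $(\{1\},\{2\})$, $(\{2\},\{1\})$, $(\{3\},\{1,2\})$ are skew cross-intersecting with sum $\frac12+\frac12+\frac13>1$; you would need the uniform Frankl--Kalai bound $N\le\binom{a+b}{a}$ instead, which luckily is all your application requires.) The paper's route to the factor $\frac12$ is global rather than local, and it uses the one consequence of property $P$ you never touch, namely $k=2$: any two members of $\cF$ intersect. It takes minimal transversals $E_i$ of \emph{all} other sets and then doubles the system to $2t$ pairs by appending the reversed pairs $(E_i,F_i)$ in the opposite order; the pairwise intersection of the $F_i$'s is exactly what makes the doubled collection skew cross-intersecting, so the Frankl--Kalai theorem \cite{F,K} gives $2t\le\binom{2m}{m}$, i.e., $t\le\frac12\binom{2m}{m}=\binom{2m-1}{m}$.

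For (i) the gap is more fundamental. Your plan uses only property $P$ of the reachable family (via Lemma~\ref{reach}), but the paper's proof of (i) does not go through property $P$ alone: it proves a structural lemma about preference matrices (Lemma~\ref{ellem}) saying that for any preference lists either $|\bigcap_\tau s(\tau)|\ge \ell-4$, or there is a set $X$ with $|X|\ge \ell^2$ such that $|s(\tau)\cap X|\ge |X|-\ell$ for every POM $\tau$. In the first case one deletes the common core and applies Theorem~\ref{Frankl} with $r=m-\ell+4$, $k=m+1$; in the second, a pigeonhole over the $\binom{|X|}{\ell}$ possible traces on $X$ reduces to a family $\cF_Y$ of much smaller uniformity, and Theorem~\ref{Frankl} finishes. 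Your harmonic-profile heuristic supplies no substitute mechanism: the members of a disjointly representable subfamily can sit anywhere in the peeling order, their mutual private elements bear no direct relation to the peeling increments $\lfloor m/t\rfloor$, and nothing in your outline explains how the defining inequality $\ell^{\ell}\le m$ would enter the computation. As you acknowledge, this part is a hope rather than a proof; moreover, since it relies on property $P$ only, it would have to beat the best known property-$P$ bound, which is exactly part (ii) --- the paper gets (i) only by exploiting the house-allocation structure itself.
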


Note that $\ell\ge \frac{\log m}{\log\log m}$, therefore the bounds of (i) and (ii) differ by a factor of $\frac{\log m}{m}$.
%This implies that $f(m)\le \binom{2m}{m}$, and 

%The first sentence below is still quite weird, but I don't see what you want to say exactly. -ct
\subsection{Applications to pattern matching problems}
In the topic of matchings the problem of counting the number of sets of reachable houses has still not received considerable attention. The problem was first raised in~\cite{HJK} (a precursor of~\cite{BHJKRST}) where it was shown that it can be applied to upper bound the complexity of a certain subdivision of the plane which arises from a pattern matching problem. Moreover, this was used to derive efficient algorithms for this pattern matching problem. 

Before stating the connection and the implication of our result, we repeat the most important definitions from  \cite{BHJKRST}.

Let $A=\{a_1,\ldots,a_n\}$ and $B=\{b_1,\ldots,b_m\}$ be two sets of points in the plane. We assume that $m \le n$, and we seek a minimum-weight maximum-cardinality matching of $B$ into $A$. This is a subset $M$ of edges of the complete bipartite graph with edge set $B\times A$, so that each $b\in B$ appears in exactly one edge of $M$, and each $a\in A$ appears in at most one edge. The weight of an edge $(b,a)$ is $\|b-a\|^2$, the squared Euclidean distance between~$b$ and $a$, and the weight of a matching is the sum of the weights of its edges.

Allowing the pattern $B$ to be translated, we obtain the problem of computing the minimum partial-matching RMS (Root Mean Square) distance under translation. That is,  we want to find a translation vector $t$ and a matching of $B$ into $A$ such that the weight of the matching edges determined by the Euclidean distances between $B(t)$ ($B$ translated by $t$) and $A$ is minimal.

This problem induces a subdivision of the plane, where two points $t_1,t_2$ are in the same region if when translating $B$ by $t_1$ or $t_2$, the above defined minimum-weight matching of $B$ to $A$ is the same. This is called the \emph{partial-matching subdivision} and is denoted by $\cal D_{B,A}$.

In \cite{HJK,BHJKRST} it was shown that the combinatorial complexity of $\cal D_{B,A}$ can be upper bounded by $O(n^2m^4f(m))$ and that computing a global minimum of the partial-patching RMS distance can be done in time $O(n^3m^8f(m))$.

Using the result of \cite{AKM} this implied for them the upper bound $O(n^2m^{3.5} (e \ln m+e)^m)$ for the complexity of $\cal D_{B,A}$. Our Theorem~\ref{main} improves this to $O(n^2m^{3.5} 4^m)$. Similarly, their upper bound on computing a global minimum was $O(n^3m^{7.5}(e\ln m+e)^m)$, which Theorem~\ref{main} improves to $O(n^3m^{7.5} 4^m)$.

These bounds are still not polynomial in $m$, but unfortunately we cannot even hope to achieve a polynomial bound on these problems only by improving bounds on $f(m)$ as it was shown in~\cite{BHJKRST} that $f(m)= \Omega(2^m/\sqrt{m})$.

\section{Proofs}
We start with a lemma about reachable sets. We think of the $m$ buyers' preference lists as a \emph{preference matrix} $M$ of $m$ infinite rows. The $(i,j)$-entry of $M$ is the index of the house that buyer $i$ has in the $j$-th place in his preference list.

\begin{lemma}\label{ellem}
Let $\ell=\ell(m)$ be the largest integer with $\ell^{\ell}\le m$. Then for any list of preferences of $m$ buyers, we either have $|\bigcap_{\tau}s(\tau)|\ge \ell-4$ or there is a set $X$ of houses with $|X|\ge \ell^2$ such that for any POM $\tau$ we have $|s(\tau)\cap X|\ge |X|-\ell$.
\end{lemma}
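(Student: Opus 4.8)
The plan is to prove the dichotomy by walking down a tree of common preference prefixes and tracking how fast the family of buyers sharing a given prefix shrinks. Two quantitative facts drive everything. First, an \emph{always-sold criterion}: if a set $A'$ of buyers all have the same first $d$ houses $h_1,\dots,h_d$ at the top of their lists (in this order) and $|A'|\ge d$, then each of $h_1,\dots,h_d$ lies in $\bigcap_{\tau}s(\tau)$. Arguing by induction on $d$ and using Lemma~\ref{equiv} (so that it suffices to consider greedy matchings $\tau_\pi$), in any greedy order the houses $h_1,\dots,h_{d-1}$ are each taken by someone, so at most $d-1$ buyers of $A'$ can be matched to them; since $|A'|\ge d$, some buyer of $A'$ reaches down to $h_d$ or finds it already gone, forcing $h_d$ to be sold.

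Second, a \emph{robust-width criterion}: if $A'$ share the length-$d$ prefix $h_1,\dots,h_d$ and $D$ denotes the set of distinct $(d+1)$-st choices taken over $A'$, then every POM misses at most $d$ houses of $D$. For a greedy $\tau_\pi$, if $g\in D$ is unsold then every buyer whose $(d+1)$-st choice is $g$ must have been matched to one of $h_1,\dots,h_d$, since anything worse than $g$ would already have forced $g$ to be sold. Distinct unsold values $g$ use disjoint nonempty blocks of such buyers, and these buyers occupy distinct houses among $h_1,\dots,h_d$, so there are at most $d$ unsold houses in $D$. Taking $X=D$ thus yields a set on which every POM misses at most $d\le\ell$ houses, and since $D$ is a set of $(d+1)$-st choices at a \emph{single} prefix its elements are automatically distinct; so the only thing left to secure the second alternative is $|D|\ge\ell^2$.

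Combining these, I would run a recursion in which the hypothesis $\ell^\ell\le m$ is spent. Starting with all $m$ buyers and the empty prefix, at a stage with buyer set $A'$ sharing a prefix of length $d$ I look at the distribution of their next choices: if some value is taken by a large proportion I extend the prefix by that value and pass to the corresponding sub-family, losing only a bounded factor in the number of buyers, so that starting from $m\ge\ell^\ell$ I can keep $|A'|\ge d$ for about $\ell$ steps and then deposit roughly $\ell$ houses into $\bigcap_{\tau}s(\tau)$ via the always-sold criterion, giving the first alternative; if the next choices fail to concentrate, the prefix has many distinct continuations and the robust-width criterion produces $X$. The main obstacle is calibrating the branching threshold so that these two regimes cover all cases within the single budget $\ell^\ell\le m$: a branching above $\ell^2$ yields $X$ at once, and a branching at most $\ell$ keeps the heavy path alive down to depth about $\ell-1$, but the intermediate range must be controlled by accumulating almost-always-sold houses across levels rather than at one node. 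It is precisely this boundary bookkeeping that forces the additive constants (the ``$-4$'' and the ``sufficiently large $m$'') in the statement.
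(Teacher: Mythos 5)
Your two criteria are exactly the ingredients of the paper's proof: the always-sold criterion is the paper's observation that an element occurring at least $i+1$ times in column $i+1$ among buyers sharing their first $i$ choices lies in every $s(\tau)$, and the robust-width criterion is the paper's closing observation that $|s(\tau)\cap X|\ge |X|-i$. Your recursion skeleton (follow a heavy common prefix, losing a bounded factor in the number of buyers per level) is also the paper's. But the proposal stops exactly where the real work begins. The regimes ``branching at least $\ell^2$'' and ``branching at most $\ell$'' are not exhaustive, and the intermediate regime is not ``boundary bookkeeping'' that merely produces the constant $-4$; it is the core case of the lemma. Consider, say, a node where every next choice has multiplicity about $m/\ell^{i+1.5}$: concentration is then too weak to keep the heavy path alive to depth $\ell-4$, while the number of distinct continuations you can certify is only about $\ell^{1.5}$, short of $\ell^2$, so neither of your regimes applies. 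Your suggested remedy, accumulating almost-always-sold houses across levels, is never carried out, and it faces a genuine difficulty: houses harvested at different depths need not be distinct, since the same house can occupy different columns in different rows, so per-level contributions do not simply add.

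The paper closes this gap at a single node, by classifying the entries of column $i+1$ on the rows of $A_i$ (where $|A_i|\ge m/\ell^i$) according to multiplicity rather than only counting distinct values. If some entry has multiplicity at least $m/\ell^{i+1}$, the heavy path continues (loss factor $\ell$ per level, so depth $\ell-4$ is affordable). Otherwise, every entry of multiplicity at least $i+1$ is always sold --- your own always-sold criterion applied one level deeper, to sibling branches rather than only along the heavy path --- so either at least $\ell-4$ such entries exist and the first alternative holds outright, or at most $\ell-5$ entries have multiplicity between $i+1$ and $m/\ell^{i+1}$, and counting the rows of $A_i$ forces
\[
(\ell-5)\frac{m}{\ell^{i+1}}+\bigl(|X|-\ell+5\bigr)\,i\ \ge\ \frac{m}{\ell^{i}},
\]
whence $|X|\ge m/\ell^{i+2}\ge \ell^{\ell-i-2}\ge\ell^2$ using $i\le\ell-4$ and $\ell^\ell\le m$, and your robust-width criterion then gives the second alternative. (In the example above, \emph{all} continuations have multiplicity well above $i+1$, so all of them are always sold and the first alternative holds --- something your scheme cannot see.) Without this counting step, or a worked-out substitute for it, what you have is two correct lemmas and a plan, not a proof.
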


\begin{proof}
If we can find $\ell$ rows such that the first $\ell-4$ columns of the preference matrix are constant restricted to these rows, then obviously the elements in these rows belong to every $s(\tau)$. We are going to work in rounds, and we want to find $A_1 \supseteq A_2 \supseteq \dots \supseteq A_{\ell-4}$ nested sets of rows such that $|A_i|\ge \frac{m}{\ell^i}$ and the first $i$ columns of the preference matrix are constant on the rows in $A_i$. If we succeed, then we have $|A_{\ell-4}|\ge \ell^4$. It is easy to see that every element from the first $\ell-4$ columns in the rows of $A_{\ell-4}$ must be in every $s(\tau)$, thus $|\bigcap_{\tau}s(\tau)|\ge \ell-4$.

Clearly, every element in the first column belongs to all sets $s(\tau)$, therefore either there are at least $\ell-4$ elements in the first column and then we are done, or there is an element that appears at least $\frac{m}{\ell-4}\ge \frac{m}{\ell}$ times. These rows form $A_1$.

%I don't understand the sentence below starting with "Also" -ct
Suppose we have defined $A_i$ such that the first $i$ columns are constant on the rows belonging to $A_i$. If we are not able to form $A_{i+1}$, then every element appears less than $\frac{m}{\ell^{i+1}}$ times in the $(i+1)$st column on the rows of $A_i$. Also, any element $x$ that appears at least $i+1$ times in column $i+1$ on the rows of $A_i$ must belong to every $s(\tau)$. Indeed, %at least $\ell+1$ 
the $i+1$ buyers corresponding to these rows share their first $i$ preferences, so one of them will have to pick its $(i+1)$-st preferred house $x$. Either that buyer or a buyer from a different row must take $x$.

Let $X$ be the set of elements appearing in the $(i+1)$-st column in the at least $\frac{m}{\ell^{i}}$ rows belonging to $A_i$. By the argument above, if $|\cap_\tau s(\tau)|<\ell-4$ and $A_{i+1}$ does not exist with the required properties, then each element of $X$ appears less than $\frac{m}{\ell^{i+1}}$ times, and all but at most $\ell-5$ elements appear at most $i$ times in the $(i+1)$-st column in those rows. Hence

%Therefore; if $|\cap_\tau s(\tau)|<\ell-4$ and $A_{i+1}$ does not exist with the required properties, then for the set $X$ of elements appearing in the $(i+1)$st column in the $\frac{m}{\ell^{i}}$ rows belonging $A_i$, we have that each element appears less than $\frac{m}{\ell^{i+1}}$ times, and all but at most $l-5$ elements appear at most $i$ times. Hence

\[
(\ell-5)\frac{m}{\ell^{i+1}}+(|X|-\ell+5)i\ge \frac{m}{\ell^i}.
\]
Rearranging and using that $i\le \ell-4$ yields $|X|\ge \frac{m}{\ell^{i+2}}\ge \ell^2$. Observe that for any $\tau$ the set $s(\tau)$ contains at least $|X|-i$ elements of $X$, those that belong to rows of $A_i$ of which the corresponding buyer has not picked their house from its first $i$ preferred houses.
\end{proof}

We remark that it is easy to obtain some negligible improvement by a more precise analysis of the above arguments. For example, we used that an element that appears at least $i+1$ times in column $i+1$ on the rows of $A_i$ must belong to every $s(\tau)$. However, in addition to these elements, the $i$ elements appearing in the first $i$ columns in those rows also belong to every $s(\tau)$. For simplicity, we did not take this (and similar arguments) into account in our calculation.

Let us now introduce the last and most important notion and result that we need for the proof of Theorem~\ref{main}. We say that a family of $k$ sets $A_1,\dots,A_k$ is disjointly representable, if there are elements $x_1,\dots,x_k$ such that $x_i\in A_j$ if and only if $i=j$ (for all $1\le i,j\le k$). In other words, none of the $A_i$'s are contained in the union of the others. We will use the following theorem of Frankl and Pach \cite{FP84}.

\begin{theorem}[Frankl, Pach \cite{FP84}]
\label{Frankl}
Let $f(r,k)$ be the maximum size of an $r$-uniform family that does not contain $k$ disjointly representable sets. Then we have
\[
f(r,k) \le \binom{r+k-1}{k-1}.
\]
\end{theorem}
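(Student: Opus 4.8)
My plan is to bound $f(r,k)$ by a dimension via the polynomial method. Fix a large ground set and assign to each element $x$ a vector $u_x\in\mathbb{R}^k$ in general position, so that any $k$ of the $u_x$ are linearly independent (one may take points on the moment curve). To each $r$-set $A=\{x_1,\dots,x_r\}$ of an $r$-uniform family $\cF$ I associate the homogeneous polynomial $P_A(z)=\prod_{x\in A}\langle u_x,z\rangle$ of degree $r$ in the $k$ variables $z=(z_1,\dots,z_k)$. Since the space of homogeneous degree-$r$ polynomials in $k$ variables has dimension exactly $\binom{r+k-1}{k-1}$, it suffices to prove that the $P_A$ are linearly independent whenever $\cF$ contains no $k$ disjointly representable sets; the bound $f(r,k)\le\binom{r+k-1}{k-1}$ then follows at once.

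I would establish independence in contrapositive form, by induction on $k$: from a nontrivial dependence $\sum_{A\in\cG}c_A P_A=0$, with $\cG\subseteq\cF$ the support (all $c_A\neq0$), I aim to extract $k$ disjointly representable sets. The engine is restriction to a coordinate hyperplane. Pick distinct $A_0,A_1\in\cG$; since they have equal size $r$, choose $x_0\in A_0\setminus A_1$. Restricting the identity to $H_{x_0}=\{z:\langle u_{x_0},z\rangle=0\}$ annihilates $P_A$ for every $A\ni x_0$ and leaves a nontrivial relation $\sum_{A\in\cG,\,x_0\notin A}c_A\,P_A|_{H_{x_0}}=0$ among products of $r$ linear forms in the $k-1$ coordinates of $H_{x_0}$; general position is preserved under restriction, and the relation is nontrivial because $A_1$ survives. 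The inductive hypothesis in $k-1$ variables then yields $k-1$ disjointly representable sets $B_1,\dots,B_{k-1}$, none containing $x_0$, with private elements $y_1,\dots,y_{k-1}$. Adjoining $A_0$ with private element $x_0$ should produce the desired $k$ disjointly representable sets.

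The step I expect to be the main obstacle is precisely this last combination. While $x_0$ is automatically private to $A_0$ (it lies in $A_0$ but in none of the $B_i$, which avoid $x_0$), there is a priori no reason for the recursively produced $y_i$ to avoid $A_0$; and if some $y_i\in A_0$, then $y_i$ fails to remain private once $A_0$ is added. To close this gap I would strengthen the inductive statement so that the disjointly representable sets are found with all their private elements lying outside a prescribed forbidden set, and then carry $A_0$ (together with the sets adjoined at deeper levels of the recursion) as part of that forbidden set. Making the strengthened hypothesis recurse — in particular verifying that the forbidden set stays small enough that general position still supplies the required private elements at each level — is the delicate bookkeeping on which the whole argument hinges.

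An alternative route worth keeping in reserve is to deduce the bound from the (skew) Bollob\'as set-pair inequality, attaching to each $A_i$ a set $B_i$ of size at most $k-1$ that is disjoint from $A_i$ but meets every earlier set, which would give $|\cF|\le\binom{r+(k-1)}{r}=\binom{r+k-1}{k-1}$. There the crux merely shifts: one must show that the failure to find such a small $B_i$ already forces $k$ disjointly representable sets, and a naive greedy construction only yields a triangular incidence pattern rather than a genuine private-element system, running into the same representability-versus-covering difficulty.
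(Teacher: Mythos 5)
Your primary route stalls exactly where you admit it does, and the proposed repair is not a repair: the strengthened induction (``find $k-1$ disjointly representable sets whose private elements avoid a prescribed forbidden set'') is false as stated, since the forbidden set $A_0$ may simply contain every element of $B_1\setminus\bigcup_{j\neq 1}B_j$, in which case no admissible private element for $B_1$ exists at all. Any fix would have to extract extra leverage from the linear dependence itself, and you give no such argument, so the proof is incomplete at its crux. Your reserve route has the same status: you correctly reduce the theorem to attaching to each $F_i\in\cF$ a set $E_i\subseteq(\bigcup_j F_j)\setminus F_i$ of size at most $k-1$ that meets every $F_j$ with $j\neq i$ (after which Bollob\'as's inequality \cite{B1965} finishes), but you explicitly leave that attachment unproved.

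The one missing idea --- and it is the entire content of the Frankl--Pach argument, which the present paper spells out inside its proof of Theorem~\ref{main}(ii) --- is \emph{minimality}. Since the $F_j$ are distinct $r$-sets, $(\bigcup_j F_j)\setminus F_i$ meets every $F_j$ with $j\neq i$; let $E_i$ be a \emph{minimal} subset of it with this property. Minimality supplies, for each $x\in E_i$, a witness set $F_{j(x)}$ with $F_{j(x)}\cap E_i=\{x\}$, for otherwise $E_i\setminus\{x\}$ would still be a transversal. If $|E_i|\ge k$, choose distinct $x_1,\dots,x_k\in E_i$: the witnesses $F_{j(x_1)},\dots,F_{j(x_k)}$ are pairwise distinct (their intersections with $E_i$ differ) and are disjointly representable via $x_1,\dots,x_k$, a contradiction. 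Hence $|E_i|\le k-1$, the pairs $(F_i,E_i)$ satisfy $F_i\cap E_j\neq\emptyset$ if and only if $i\neq j$, and Bollob\'as's inequality $\sum_{i}\binom{|F_i|+|E_i|}{|E_i|}^{-1}\le 1$ yields $|\cF|\le\binom{r+k-1}{k-1}$. Incidentally, this construction also rescues your polynomial formulation: your independence claim for the $P_A$ is in fact true, but the way to prove it is to evaluate at generic points $v_j$ orthogonal to $\{u_x : x\in E_j\}$ (possible precisely because $|E_j|\le k-1$ in $\mathbb{R}^k$), which gives $P_{F_i}(v_j)\neq 0$ if and only if $i=j$ and hence independence by a triangular argument; so even the linear-algebraic packaging must pass through the minimal-transversal step, not through restriction-induction.
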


% Sets of possibly selected houses sounds terrible, but I'm searching for better phrasing.  
\begin{proof}  [Proof of Theorem \ref{main}]
A bound of $f(m)\le g(m)\le \binom{2m}{m}$ is immediate. Indeed, using only $k=m+1$ in the definition of property $P$, we obtain that any $m+1$ sets in $\cF$ are not disjointly representable. Applying Theorem~\ref{Frankl} with $k=m+1$, $r=m$ yields $g(m)\le \binom{2m}{m}$.

To obtain the improvement of (i), observe that if $s(\tau_1),s(\tau_2),\dots$ are the sets of possibly selected houses, then the family $s(\tau_1)\setminus (\bigcap_\tau s(\tau))$, $s(\tau_2)\setminus (\bigcap_\tau s(\tau))$,\dots still does not contain $m+1$ disjointly representable sets. If Lemma~\ref{ellem} yields $|\bigcap_\tau(s(\tau)|\ge \ell-4$, then we can apply Theorem~\ref{Frankl} with $k=m+1$, $r=m-\ell+4$ to get $f(m)\le \binom{2m-\ell+4}{m}$. Otherwise Lemma \ref{ellem} yields a set $X$ of at least $\ell^2$ elements such that for every POM $\tau$ we have $|s(\tau)\cap X|\ge |X|-\ell$. Let $\cF=\{s(\tau):\tau ~\text{is a POM}\}$ and for any set $Y$ let $\cF_Y=\{F\setminus Y:Y\subset F\in \cF\}$. By the pigeonhole principle, there exists a subset $Y\subset X$ of size $|X|-\ell$ such that $|\cF|\le \frac{|\cF_Y|}{\binom{|X|}{\ell}}$. Observe that $\cF_Y$ also has property $P$, thus $\cF_Y$ cannot contain $m+1$ disjointly representable sets. Therefore, by Theorem~\ref{Frankl} we obtain $|\cF_Y|\le \binom{2m-|X|+\ell}{m}$ and thus $|\cF|\le \binom{|X|}{\ell}\binom{2m-|X|+\ell}{m}$. As $|X|\ge \ell^2$, the latter expression is smaller than $\binom{2m-\ell+4}{m}$ if $m$ is large enough.

To see the improvement of (ii), we will use only that property $P$ holds for $k=2$ and $k=m+1$. As we have seen, $k=m+1$ means there are no $m+1$ disjointly representable sets in $\cF$. On the other hand, $k=2$ means that for every pair of sets, one has at most $\lfloor m/2\rfloor$ elements not in the other, which means their intersection has size at least $\lceil m/2\rceil$. We will only use that the intersection is non-empty. As observed by Frankl and Pach~\cite{FP84}, if $F_1,F_2,\dots,F_t$ do not contain $m+1$ disjointly representable sets, then for any $1\le i\le t$ a minimal set $E_i \subset (\cup_jF_j)\setminus F_i$ that meets every $F_j$ ($j\neq i$) has size at most $m$. Then $F_i \cap E_{i'}\neq \emptyset$ if and only if $i\neq i'$. For pairs with this property, Bollob\'as's famous inequality states that $\sum_{j=1}^t\frac{1}{\binom{|F_j|+|E_j|}{|E_j|}}\le 1$, in particular $t\le \binom{2m}{m}$ as all $F_j's$ and $E_j$'s have size at most $m$.

 Since the $F_j$'s are intersecting, one can define another collection $\{(A_i,B_i)\}_{i=1}^{2t}$ with $A_i=F_i$, $B_i=E_i$ for $1\le i \le t$ and $B_i=F_{2t-i+1}, A_i=E_{2t-i+1}$ for $t+1\le i\le 2t$. Then this collection is \textit{skew cross-intersecting}, i.e., for any $i<j$ we have $A_i\cap B_j\neq \emptyset$ and $A_i\cap B_i=\emptyset$ for all $1\le i \le 2t$. As proved by Frankl~\cite{F} and Kalai~\cite{K}, if in such collections all sets have size $m$, then the number of pairs is at most $\binom{2m}{m}$. Thus $|\cF|=t\le \frac{1}{2}\binom{2m}{m}=\binom{2m-1}{m}$, as claimed.
\end{proof}

%Taking $k=m+1$ in Theorem~\ref{reach} implies that among any $k$ reachable sets one is contained in the union of the others.  That is, any set of $k$ reachable sets is not distinctly representable.  Applying Theorem~\ref{Frankl} with $k=m+1$, $r=m$ we obtain the following.

%This also improves something from \cite{HJK}.

Note that Theorem \ref{Frankl} is very close to being sharp (also shown in~\cite{FP84}). We believe that both bounds in Theorem \ref{main} are far from being sharp. The best construction we are aware of is summarized in the following simple proposition.
%The best lower bound we have is $\ge\binom{\lfloor 3m/2\rfloor}{m}$, as shown by the family $\binom{[\lfloor 3m/2\rfloor]}{m}$.

\begin{proposition}
$h(m)\ge\binom{\lfloor 3m/2\rfloor}{m}$.
\end{proposition}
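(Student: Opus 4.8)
The plan is to exhibit a single family realizing the claimed size and verify it has property $Q$. Take the ground set to be $[n]$ with $n=\floor{3m/2}$, and let $\cF=\binom{[n]}{m}$ be the family of \emph{all} $m$-element subsets of $[n]$. This immediately gives $|\cF|=\binom{\floor{3m/2}}{m}$, so the entire content of the proposition reduces to checking that this $\cF$ satisfies property $Q$.

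To verify property $Q$, I would fix an arbitrary $k$ and arbitrary $F_1,\dots,F_k\in\cF$. Since each $F_i$ is contained in the ground set $[n]$, there is the trivial bound
\[
\abs{\bigcup_{i=1}^k F_i}\le n=\floor{3m/2}.
\]
Hence it suffices to show $\sum_{i=1}^k\floor{m/i}\ge\floor{3m/2}$ for all $k\ge 2$; the case $k=1$ just reads $\abs{F_1}\le m$, which holds with equality.

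The one arithmetic point to record is the identity $m+\floor{m/2}=\floor{3m/2}$, checked by splitting into $m$ even and $m$ odd. For any $k\ge 2$, the sum $\sum_{i=1}^k\floor{m/i}$ is at least its first two terms, namely $m+\floor{m/2}=\floor{3m/2}$, so the required inequality holds for every $k\ge 2$; combined with the union bound this establishes property $Q$. There is essentially no obstacle here: the only subtlety is that the $k=2$ instance of property $Q$ is exactly tight, which is precisely what forces the choice $n=\floor{3m/2}$ and makes every larger value of $k$ automatic.
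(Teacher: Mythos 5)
Your proof is correct and uses exactly the same construction as the paper: the family $\binom{[\lfloor 3m/2\rfloor]}{m}$, with property $Q$ verified via uniformity for $k=1$ and via the ground-set bound $\lfloor 3m/2\rfloor = m+\lfloor m/2\rfloor \le \sum_{i=1}^k\lfloor m/i\rfloor$ for $k\ge 2$. Your write-up is in fact more explicit than the paper's terse verification, but the argument is the same.
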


\begin{proof}
Consider the family $\binom{[\lfloor 3m/2\rfloor]}{m}$. Property $Q$ holds for this family for $k=1$ because of the uniformity, and for larger $k$ because $\sum_{i=1}^k |F_i|$ cannot be larger than the underlying set.
\end{proof}

Many variants, generalizations and strengthenings~\cite{F,Fu,K,KKK,KNPV,L,P,SW,T,Tu1,Tu2,Tu3} of Bollob\'as's inequality have been established. By the remarks above, to improve our bounds, we would be interested in versions of Bollob\'as's inequality when the $A_i$'s satisfy some extra intersection property. Let $j(m)$ denote the largest cardinality of an $(\lceil m/2\rceil)$-intersecting family without $m+1$ disjointly representable sets. Then by the arguments above we have $g(m)\le j(m)\le\binom{2m-1}{m}$. 

We finish our note with two conjectures starting with the 1-intersecting case. 

%I don't think this is well known -CT

%\begin{rem} One idea was to use that Theorem~\ref{reach} with $k=2$ implies that the family of reachable sets of houses is $m/2$-intersecting.  Then using that Theorem~\ref{Frankl} is immediate from Bollob\'as's two family theorem we would try to prove a $t$-intersecting version of this theorem. \end{rem}

\begin{conjecture}\label{tuz}%[We think first proposed by Tuza]
Let $\{(A_i,B_i)\}_{i=1}^m$ be a collection of pairs of sets such that $A_i \cap B_j = \varnothing$ if and only if $i=j$ and $A_i \cap A_j \neq \varnothing$ for all $i$ and $j$.  Let $\abs{A_i}=a$ and $\abs{B_i}=b$ and $a \le b$, then
\[
m \le \binom{a+b-1}{a-1}
\]
\end{conjecture}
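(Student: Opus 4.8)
The plan is to prove the conjecture by the multilinear (exterior algebra) method, realizing the intersecting condition on the $A_i$ as an isotropy condition inside $\Lambda^a$. Assign to each element of the ground set a generic vector in $\mathbb{R}^{a+b}$, so that any $a+b$ of them are linearly independent. For a set $S$ write $v_S=\bigwedge_{z\in S}x_z$ for the wedge of the corresponding vectors, so that $v_{A_i}\in\Lambda^a(\mathbb{R}^{a+b})$ and $v_{B_i}\in\Lambda^b(\mathbb{R}^{a+b})$. For disjoint sets whose union has exactly $a+b$ elements the generic choice guarantees $v_S\wedge v_T\neq 0$, while if the sets meet then a repeated vector forces $v_S\wedge v_T=0$.

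First I would record the two consequences of the hypotheses. Since $A_i\cap B_j=\varnothing$ precisely when $i=j$, the products $v_{A_i}\wedge v_{B_j}\in\Lambda^{a+b}(\mathbb{R}^{a+b})\cong\mathbb{R}$ form a diagonal array, nonzero for $i=j$ and zero otherwise; wedging a dependence $\sum_i c_i v_{A_i}=0$ with $v_{B_j}$ isolates $c_j$, so the $v_{A_i}$ are linearly independent. Next, because the $A_i$ are pairwise intersecting and $2a\le a+b$, every product $v_{A_i}\wedge v_{A_j}$ vanishes. Hence $U=\operatorname{span}\{v_{A_i}\}\subseteq\Lambda^a(\mathbb{R}^{a+b})$ satisfies $\omega\wedge\eta=0$ for all $\omega,\eta\in U$, and the number of pairs equals $\dim U$. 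The conjecture thus reduces to the purely algebraic statement that any such self-annihilating subspace of $\Lambda^a$ of an $n$-dimensional space (with $n=a+b$ and $2a\le n$) has dimension at most $\binom{n-1}{a-1}$. This bound is attained by the space of all $a$-forms divisible by a fixed vector $x_1$, which corresponds to taking all the $A_i$ through one common element, so $\binom{a+b-1}{a-1}$ is the correct target.

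To prove this algebraic lemma I would induct on $n$. Writing $V=\langle e\rangle\oplus W$ with $\dim W=n-1$, decompose each $\omega\in U$ as $\omega=e\wedge\alpha+\beta$ with $\alpha\in\Lambda^{a-1}W$ and $\beta\in\Lambda^aW$, and let $p\colon U\to\Lambda^aW$ be the projection $\omega\mapsto\beta$. Expanding $\omega\wedge\omega'=0$ and separating the components that contain $e$ from those that do not yields two families of relations: the $e$-free part gives $\beta\wedge\beta'=0$, so $p(U)$ is again self-annihilating and by induction $\dim p(U)\le\binom{n-2}{a-1}$; the part containing $e$ gives a bilinear relation between the $\alpha$- and $\beta$-components, which on the kernel $K=\ker p$ (the forms of $U$ divisible by $e$) specializes to $\alpha\wedge\beta'=0$ for all $\alpha\in K$ and $\beta'\in p(U)$. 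To close the induction I must show $\dim K\le\binom{n-2}{a-2}$, after which Pascal's identity $\binom{n-2}{a-1}+\binom{n-2}{a-2}=\binom{n-1}{a-1}$ finishes the step.

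The main obstacle is exactly this bound on $\dim K$, that is, exploiting the cross-condition $K\wedge p(U)=0$ together with the residual bilinear relation among the mixed components. When $p(U)$ is extremal (all forms divisible by a single vector) the condition forces the elements of $K$ to share that divisor and $\dim K\le\binom{n-2}{a-2}$ is immediate; but when $p(U)$ is small the naive split $\dim U=\dim K+\dim p(U)$ is too weak, so $K$ and $p(U)$ cannot be treated independently. I therefore expect the real work to lie in formulating the correct simultaneously-inductive statement about a pair of \emph{cross-annihilating} subspaces of consecutive exterior powers and verifying that the combined bound is preserved. This is the graded analogue of the Lagrangian bound underlying part (ii) in the balanced case $a=b$, where the wedge pairing lands in the one-dimensional top power $\Lambda^{a+b}$ and the result reduces to the statement that a totally isotropic subspace has at most half the dimension.
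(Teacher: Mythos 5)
You are attempting a statement that the paper itself does \emph{not} prove: Conjecture~\ref{tuz} is posed as an open problem, and the paper only records two partial results --- the case $a=b$, which follows from the skew cross-intersecting set-pair argument in the proof of Theorem~\ref{main}~(ii) (Frankl, Kalai), and the weaker general bound $m\le\frac{1}{2}\binom{a+b}{a}$, quoted from Scott and Wilmer. Judged as an attempt at this open problem, your write-up is honest but incomplete. The reduction itself is sound: with generic vectors, wedging a linear dependence against the $v_{B_j}$ shows the $v_{A_i}$ are linearly independent, and pairwise intersection of the $A_i$ together with $2a\le a+b$ makes $U=\operatorname{span}\{v_{A_i}\}$ self-annihilating, so the conjecture would indeed follow from the algebraic statement that every subspace $U\subseteq\Lambda^a(\mathbb{R}^{a+b})$ with $\omega\wedge\eta=0$ for all $\omega,\eta\in U$ satisfies $\dim U\le\binom{a+b-1}{a-1}$. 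Your reduction also correctly recovers the one case that is known: for $a=b$ the wedge pairing into $\Lambda^{a+b}\cong\mathbb{R}$ is nondegenerate, and total isotropy gives $\dim U\le\frac{1}{2}\binom{2a}{a}=\binom{2a-1}{a-1}$, which is the paper's own argument in set-pair language.

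The gap is the algebraic lemma itself, and it is not a technical loose end but the entire difficulty. Your induction on $n$ breaks exactly where you say it does: after writing $\omega=e\wedge\alpha+\beta$, the inductive hypothesis controls $\dim p(U)\le\binom{n-2}{a-1}$, but the needed bound $\dim\ker p\le\binom{n-2}{a-2}$ does not follow from the only available constraint, namely $\alpha\wedge\beta'=0$ for $\alpha\in\ker p$ and $\beta'\in p(U)$. That constraint is vacuous when $p(U)=0$ (there one gets only $\dim\ker p\le\binom{n-1}{a-1}$, which saves that degenerate case but nothing more) and degenerates whenever $p(U)$ is small, so $\ker p$ and $p(U)$ cannot be bounded independently; the ``simultaneously-inductive statement about cross-annihilating pairs'' you defer to is precisely the missing ingredient, and no candidate formulation is given, let alone verified. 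Note also that the lemma you would need is formally stronger than Conjecture~\ref{tuz}: $U$ ranges over arbitrary subspaces, not just spans of decomposable vectors coming from set systems, and for $a\ge 3$ a self-annihilating subspace may contain indecomposable forms, so you might be trying to prove something harder than (and conceivably false even if) the conjecture is true. This stronger question is essentially the one studied by Scott and Wilmer, whose exterior-algebra methods yield only $\frac{1}{2}\binom{a+b}{a}$ --- exactly the bound the paper already cites. In short, the proposal is a correct reformulation of the conjecture in exterior-algebra language, but it stops at the known frontier rather than crossing it.
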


By the reasoning in the proof of Theorem \ref{main} (ii) (the idea of which goes back to Poljak and Tuza~\cite{PT}), Conjecture \ref{tuz} is true if $a=b$. Moreover, as Scott and Wilmer~\cite{SW} have recently established a Bollob\'as type inequality for skew intersecting set pairs $\{A_i,B_i\}_{i=1}^m$ with $|A_1|\le |A_2|\le \dots \le |A_m|$ and $|B_1|\ge |B_2|\ge \dots \ge |B_m|$, their result implies that in Conjecture \ref{tuz} the bound $m\le \frac{1}{2}\binom{a+b}{a}$ holds.

%Indeed, given pairs $\{(A_i,B_i)\}_{i=1}^m$ with the above property, one can define another collection $\{(F_i,G_i)\}_{i=1}^{2m}$ with $F_i=A_i,G_i=B_i$ for $1\le i \le m$ and $F_i=B_{2m-i+1}, G_i=A_{2m-i+1}$ for $m+1\le i\le 2m$. Then this collection is \textit{skew cross-intersecting}, i.e. for any $i<j$ we have $F_i\cap G_j\neq \emptyset$. As proved by Frankl \cite{F} and Kalai \cite{K} if in such collections all sets have size $k$, then the number of pairs is at most $\binom{2k}{k}$. Therefore if $a=b$, then $2m\le \binom{2a}{a}=2\binom{2a-1}{a-1}$. 

We propose a straightforward generalization for the $t$-intersecting case. Let $AK(n,k,t)$ denote the maximum size of a $k$-uniform $t$-intersecting family $\cF\subseteq \binom{[n]}{k}$. The value of $AK(n,k,t)$ for any $n,k,t$ was determined by Ahlswede and Khatchatrian~\cite{AK}.

\begin{conjecture}\label{ak}
Let $\{(A_i,B_i)\}_{i=1}^m$ be a collection of pairs of sets such that $A_i \cap B_j = \varnothing$ if and only if $i=j$ and $\abs{A_i \cap A_j} \ge t$ for all $i$ and $j$.  Let $\abs{A_i}=a$ and $\abs{B_i}=b$ and $a \le b$, then
\[
m \le AK(a+b,a,t).
\]
\end{conjecture}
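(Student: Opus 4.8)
The plan is to mimic the structure that proves Conjecture~\ref{tuz} in the $a=b$ case (which is the skew-pair/Bollob\'as route used in the proof of Theorem~\ref{main}(ii)), but to replace the plain intersecting condition $A_i\cap A_j\neq\varnothing$ with the stronger $t$-intersecting condition $|A_i\cap A_j|\ge t$, and to replace the Bollob\'as bound $\binom{a+b}{a}$ by the Ahlswede--Khatchadourian bound $AK(a+b,a,t)$. First I would try to see the conjecture as a common generalization of two known extremal results: setting $B_i=[a+b]\setminus A_i$ (so that $b=(a+b)-a$ and the condition $A_i\cap B_j=\varnothing$ becomes $A_j\subseteq A_i^c$, i.e.\ $A_i\subseteq A_j$ forcing equality along the diagonal) should degenerate the statement into exactly the $t$-intersecting Ahlswede--Khatchadourian theorem, which confirms that $AK(a+b,a,t)$ is the right target and that the bound is tight when $a+b=n$ and the sets are genuine complements.

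The key technical step would be to find the correct weighting. In the $a=b$, $t=1$ case one uses either Bollob\'as's inequality $\sum_i 1/\binom{a+b}{a}\le 1$ directly, or the skew-symmetrization trick that doubles the family and invokes the Frankl--Kalai skew bound. For the $t$-intersecting version there is no clean ``$\sum 1/\binom{a+b}{a}\le 1$'' identity, so I would instead look for an \emph{injective} or \emph{polynomial/exterior-algebra} argument: assign to each pair $(A_i,B_i)$ a suitable multilinear form or a generic tensor, and show that the $A_i$-vectors are linearly independent in an appropriate space whose dimension equals $AK(a+b,a,t)$. The natural candidate is the space used in the algebraic proofs of Ahlswede--Khatchadourian-type bounds, namely the span of monomials indexed by the extremal Frankl families $\cF(n,k,t)$; one would need to check that the skew condition $A_i\cap B_j=\varnothing\iff i=j$ guarantees a triangular (hence nonsingular) evaluation matrix, while the $t$-intersecting condition $|A_i\cap A_j|\ge t$ restricts the $A_i$ to a subspace of the claimed dimension.

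The main obstacle, which I expect to be genuinely hard, is that the extremal $t$-intersecting families of Ahlswede and Khatchadourian are \emph{not} a single nested structure but a family of Frankl configurations $\mathcal F_r$ whose optimum index $r$ depends delicately on $n,k,t$; constructing a linear-algebraic witness whose dimension equals $AK(a+b,a,t)$ on the nose (rather than some clean binomial coefficient) is the crux, since the usual exterior-algebra method yields dimensions of the form $\binom{n}{\le s}$ and does not obviously reproduce the piecewise-defined $AK$ value. A plausible fallback would be to prove the weaker bound $m\le\frac12\binom{a+b}{a}\cdot(\text{correction for }t)$ via the Scott--Wilmer skew inequality applied to the $t$-shadow or to a $t$-fold ``blow-up'' of the ground set, thereby at least establishing the conjecture up to a constant factor or in the diagonal range $a=b$; settling the exact constant $AK(a+b,a,t)$ in full generality is what I would flag as likely requiring a new skew analogue of the Ahlswede--Khatchadourian theorem rather than a direct reduction to it.
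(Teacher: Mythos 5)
The statement you were assigned is Conjecture~\ref{ak}: it is one of the paper's two closing conjectures, stated as an open problem, so the paper contains no proof of it to compare against. The only nearby cases in which the paper knows anything are degenerate ones: for $t=1$ and $a=b$ the corresponding bound follows from the Poljak--Tuza set-pair argument \cite{PT} used in the proof of Theorem~\ref{main}(ii) (this is the remark after Conjecture~\ref{tuz}), and for $t=1$ and general $a\le b$ the Scott--Wilmer inequality \cite{SW} gives only the weaker bound $m\le\frac{1}{2}\binom{a+b}{a}$. For $t\ge 2$ nothing is proved in the paper at all.

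Your proposal, as you yourself flag, is not a proof but a program, and the gap is exactly where you locate it. Two parts of it are sound: the specialization $B_i=[a+b]\setminus A_i$ does degenerate the hypothesis $A_i\cap B_j=\varnothing\iff i=j$ into mere distinctness of the $A_i$ (since $A_i\cap B_j=\varnothing$ means $A_i\subseteq A_j$, hence $A_i=A_j$ by uniformity), so the conjecture contains the Ahlswede--Khachatrian theorem \cite{AK} as a special case and the constant $AK(a+b,a,t)$ cannot be lowered; and you correctly identify that the piecewise-defined value of $AK$ (a maximum over Frankl-type configurations whose optimal index depends on $a,b,t$) is incompatible with the dimensions that exterior-algebra or polynomial-space arguments naturally produce. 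But no candidate space, weighting, or evaluation matrix is actually constructed, so the central claim is entirely unsupported. The fallback is also not viable as described: the Scott--Wilmer theorem needs the full skew cross-intersecting structure with monotone size orderings, and passing to $t$-shadows or to a blown-up ground set does not preserve the biconditional $A_i\cap B_j=\varnothing\iff i=j$ (a shadow element of $A_i$ may be disjoint from $B_j$ for $j\neq i$), so one cannot quote their result after such a reduction without a genuinely new argument. In short: correct tightness check, correctly diagnosed obstruction, but no proof; the conjecture remains open after your attempt, exactly as it is in the paper.
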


Conjecture \ref{ak}, if true, would yield an upper bound $f(m)\le j(m)\le \max\{|\cF_i|:i=0,1,\dots,m/2\}$, where $\cF_i=\{F\in \binom{[2m]}{m}:|F\cap [\lceil \frac{m}{2}\rceil+2i]|\ge \lceil \frac{m}{2}\rceil+i\}$. Note that the maximum is taken approximately at $i=\frac{m}{8}$.

\vskip 1truecm

\textbf{Acknowledgements.} The results presented in this note were obtained during the 9th Eml\'ekt\'abla Workshop, in G\'ardony, June 2019. The research was supported by the National Research, Development and Innovation Office NKFIH  under  the  grants  K  116769,  KH130371  and  SNN  129364.   The research  of  Gerbner was supported by the J\'anos Bolyai Research Fellowship. The research of Keszegh was supported by the Lend\"ulet program of the Hungarian Academy of Sciences (MTA), under the grant LP2017-19/2017. The research of Methuku and Tompkins was supported by IBS-R029-C1.

\end{document}